\newtheorem{theorem}{Theorem}
\newtheorem{definition}[theorem]{Definition}
\newtheorem{example}[theorem]{Example}
\newtheorem{lemma}[theorem]{Lemma}
\newenvironment{proof}[1][Proof]{\noindent\textbf{#1.} }{\ \rule{0.5em}{0.5em}}
\begin{document}

\title{Best approximation from the positive cone of an inner product lattice}
\author{Marwen Abdouli\\{\small Laboratoire de Recherche LATAO}\\{\small D\'{e}partement de Mathematiques, Facult\'{e} des Sciences de Tunis}\\{\small Universit\'{e} de Tunis El Manar, 2092, El Manar, Tunisia}\\\texttt{{\small marouen.abdouli@ipest.ucar.tn}}
\and Karim Boulabiar\\{\small Laboratoire de Recherche LATAO}\\{\small D\'{e}partement de Mathematiques, Facult\'{e} des Sciences de Tunis}\\{\small Universit\'{e} de Tunis El Manar, 2092, El Manar, Tunisia}\\\texttt{{\small karim.boulabiar@fst.utm.tn}}}
\maketitle

\begin{abstract}
Let $E$ be a directed (i.e., positively generated) ordered vector space
endowed with an inner product. In this note, we prove that the following
statements are equivalent:

\begin{enumerate}
\item[(i)] $E$ is a vector lattice and its norm induced by its inner product
is a lattice norm.

\item[(ii)] The metric projection onto the positive cone $E^{+}$ of $E$ exists
and it is both isotone and subadditive.

\end{enumerate}

Moreover, in this case, the best approximation to any $x\in E$ from $E^{+}$
coincides with its positive part $x^{+}$.This result extends previous work to
the non-complete setting.

\end{abstract}

\bigskip

\section{Introduction}

In this paper, an \emph{inner product lattice} is defined as a real vector
lattice endowed with an inner product whose induced norm is a lattice norm.
Equivalently, it is a normed vector lattice in which the norm arises from an
inner product. When this norm is complete, the space is referred to as a
\emph{Hilbert lattice}.

In any normed vector lattice $E$, the positive cone $E_{+}$ is norm-closed
\cite{Z97}. Consequently, when $H$ is a Hilbert lattice, $H^{+}$ forms a
norm-complete convex set in $H$, and hence a Chebyshev set in $H$ in the sense
of Deutsch \cite{D01}. It follows that the metric projection $P_{H^{+}}$ onto
$H^{+}$ is automatically well-defined. Furthermore, it is even known that the
unique best approximation of any $x\in H$ from $H^{+}$ is precisely the
positive part $x^{+}$ of $x$. This striking result originates from the work of
Isac and N\'{e}meth \cite{IN90(a),IN89,IN90(b)}, who build substantially upon
a classical theorem by Moreau \cite{M62}. Applying again the Moreau's theorem,
and his aforementioned `best approximation' result with Isac, N\'{e}meth
established the following outstanding theorem.

Let $H$ be an ordered vector space whose positive cone $H^{+}$ is generating
and norm-closed (so that $P_{H^{+}}$ exists). Then the following conditions
are equivalent:

\begin{enumerate}
\item[(i)] $H$ is a Hilbert lattice.

\item[(ii)] $P_{H^{+}}$ is isotone and subadditive.
\end{enumerate}

A natural question arises concerning the non-complete setting. As far as we
are aware, this avenue of research remains unexamined in the existing
literature. Indeed, in general, the positive cone of an inner product lattice
need not be complete (see Example \ref{Exp} below), and therefore no \emph{a
priori} guarantee ensures the existence of the metric projection onto the
positive cone of an arbitrary inner product lattice. In the present work, we
provide a comprehensive solution to this problem. Specifically, we will
provide an elementary proof that the positive cone $E^{+}$ of a (not
necessarily complete) inner product lattice $E$ is a Chebyshev set in $E$, and
that the aforementioned `best approximation' result obtained by Isac and
N\'{e}meth remains valid in this broader context. In fact, we establish the
following substantially more general theorem:

Let $E$ be a directed ordered vector space---equivalently, a positively
generated one---equipped with an inner product. Then the following conditions
are equivalent:

\begin{enumerate}
\item[(i)] $E$ is an inner product lattice.

\item[(ii)] $E_{+}$ is a Chebyshev set in $E$, and $P_{E^{+}}$ is isotone and subadditive.
\end{enumerate}

\noindent In this setting, the equality $P_{E^{+}}\left(  x\right)  =x^{+}$
continues to hold for all $x\in E_{+}$. It is worth emphasizing that, beyond
extending the best approximation theorem of Isac and N\'{e}meth, our result
also generalizes N\'{e}meth's equivalence to the non-complete case.

To facilitate the reading of this paper, the reader is recommended to keep
close at hand books \cite{AB06,Z97} on vector lattices (also called Riesz
spaces) and book \cite{D01} on best approximation theory.

\section{Notation and terminology}

\begin{quote}
\textsl{Throughout the paper, we impose the standing assumption that all
vector spaces under consideration are real.}
\end{quote}

\noindent Let $E$ be a normed vector space and $A$ be a nonempty subset of
$E$. By a \emph{best approximation from }$A$ to an element $x\in E$, we mean
any element $x^{\ast}\in A$---if one exists---such that%
\[
\left\Vert x-x^{\ast}\right\Vert =\inf\left\{  \left\Vert x-a\right\Vert :a\in
A\right\}  .
\]
If such an element $x^{\ast}$ exists, we say that $x$ has a \emph{best
approximation in }$A$. We call $A$ a \emph{Chebyshev set} in $E$ if any $x\in
E$ has a unique best approximation in $A$. If $A$ is a Chebyshev set in $E$,
the unique best approximation of an element $x\in E$ is denoted by
$P_{A}\left(  x\right)  $. The map $P_{A}$, thus defined as the one that
assigns to each $x\in E$ its best approximation $P_{A}\left(  x\right)  $, is
called the \emph{metric projection} of $E$ onto $A$. Given that all these
conditions are fulfilled, it follows straightforwardly that if $x\in E$ then
$x\in A$ if and only if $P_{A}\left(  x\right)  =x$. In particular, $P_{A}$ is
an idempotent map. Further information on best approximations can be found in
the monograph \cite{D01} by Deutsch.

The following lines are devoted to the elementary theory of ordered vector
spaces and vector lattices. We adopt the terminology, notation, and basic
results from the classical references \cite{AB06} by Aliprantis and
Burkinshaw, and \cite{Z97} by Zaanen. As usual, the \emph{positive cone }of an
ordered vector space $E$ is denoted by $E^{+}$, i.e.,%
\[
E^{+}=\left\{  x\in E:0\leq x\right\}  .
\]
The ordered vector space $E$ is said to be \emph{directed} if, for any $x,y\in
E$, the inequalities $x\leq z$ and $y\leq z$ hold for some $z\in E$. It is not
hard to see that $E$ is directed if and only if it is \emph{positively
generated}, i.e., for every $x\in E$ there exist $y,z\in E^{+}$ such that
$x=y-z$. The ordered vector space $E$ is called a \emph{vector lattice }(or a
\emph{Riesz space}) if every pair of elements $x,y\in E$ has a least upper
bound (supremum) $x\vee y$ (or, equivalently, a greatest lower bound (infimum)
$x\wedge y$). The \emph{positive part }$x^{+}$ and the \emph{negative part
}$x^{-}$ of any element $x$ in the vector lattice $E$ are defined by%
\[
x^{+}=x\vee0\text{\quad and\quad}x^{-}=\left(  -x\right)  \vee0\text{,}%
\]
respectively. Notice that both $x^{+}$ and $x^{-}$ are elements in the
positive cone $E^{+}$ of $E$. Also, it is easily checked that%
\[
x^{+}\wedge x^{-}=0\text{ and }x=x^{+}-x^{-}\text{ for all }x\in E.
\]
In particular, any vector lattice is directed since it is positively
generated. If $x$ is an element in the vector lattice $E$, the supremum
$x\vee\left(  -x\right)  $ is called the \emph{absolute value} of $x$ and
denoted by $\left\vert x\right\vert $. One may check that%
\[
\left\vert x\right\vert =x^{+}+x^{-}\text{ for all }x\in E.
\]
Also, if $x,y\in E$ then $\left\vert x\right\vert \wedge\left\vert
y\right\vert =0$ if and only if $\left\vert x+y\right\vert =\left\vert
x-y\right\vert $ ($x$ and $y$ are said to be \emph{disjoint}).

Let $E$ be a vector lattice. A norm on $E$ is called a \emph{lattice norm} if%
\[
\left\vert x\right\vert \leq\left\vert y\right\vert \text{ in }E\quad
\text{implies\quad}\left\Vert x\right\Vert \leq\left\Vert y\right\Vert \text{
in }\mathbb{R}.
\]
A vector lattice along with a lattice norm is called a \emph{normed vector
lattice}. Observe that a norm on a vector lattice $E$ is a lattice norm if and
only if $\left\Vert x\right\Vert =\left\Vert \left\vert x\right\vert
\right\Vert $ for all $x\in E$ and $\left\Vert x\right\Vert \leq\left\Vert
y\right\Vert $ whenever $x\leq y$ in $E^{+}$.

Unless stated otherwise, all the above elementary properties will be assumed
and applied without further mention.

To conclude this brief section, we introduce the core concept of the paper.

\begin{definition}
\emph{A normed vector lattice }$E$ \emph{whose norm arises from an inner
product is called an} inner product lattice.
\end{definition}

In other words, an inner product lattice is a vector lattice endowed with an
inner product for which the induced norm is a lattice norm. If the inner
product lattice has a complete norm, it is called a \emph{Hilbert lattice}.
Equivalently, a Hilbert lattice is a Banach lattice whose norm derived from an
inner product.

\section{Best approximation from the positive cone}

As pointed out in the introduction, Isaac and N\'{e}meth proved that if $H$ is
a Hilbert lattice then the positive cone $H^{+}$ is a Chebyshev set in $H$ and%
\[
P_{H^{+}}\left(  x\right)  =x^{+}\text{ for all }x\in H
\]
(see Theorem 7 in \cite{IN90(b)}). This result was further refined by
N\'{e}meth \cite[Theorem 3]{N03}), who established that if $H$ is a Hilbert
space which is simultaneously a directed ordered vector space with a
norm-closed positive cone $H^{+}$ (and hence a Chebyshev set in $H$) then $H$
is Hilbert lattice if and only if the metric projection $P_{H^{+}}$ is isotone
(i.e., $P_{H^{+}}\left(  x\right)  \leq P_{H^{+}}\left(  y\right)  $ whenever
$x\leq y$) and subadditive (i.e., $P_{H^{+}}\left(  x+y\right)  \leq P_{H^{+}%
}\left(  x\right)  +P_{H^{+}}\left(  y\right)  $ for all $x,y\in H$).
Moreover, in this case, one has again%
\[
P_{H^{+}}\left(  x\right)  =x^{+}\text{ for all }x\in H.
\]
The main goal of this section is to extend these results to the more general
setting of an inner product lattice $E$, even when its positive cone $E^{+}$
is not complete --- a possibility illustrated by the following simple counterexample.

\begin{example}
\label{Exp}Let $E$ be the vector lattice of all real-valued continuous
functions on the real interval $[-1,1]$ \emph{(}usually denoted by $C\left(
\left[  -1,1\right]  \right)  $\emph{)}$\emph{.}$ The positive cone $E^{+}$ is
given by%
\[
E^{+}=\left\{  x\in E:x\left(  r\right)  \geq0\ \text{for all }r\in
\lbrack-1,1]\right\}
\]
Obviously, $E$ is an inner product lattice with respect to the inner product
defined by
\[
\left\langle x,y\right\rangle =\int_{-1}^{1}x\left(  r\right)  y\left(
r\right)  dr\text{ for all }x,y\in E.
\]
For every $n\in\{1,2,\ldots\}$, define $x_{n}\in E^{+}$ by%
\[
x_{n}\left(  x\right)  =\left(  \min\left\{  1,nr\right\}  \right)  ^{+}\text{
for all }r\in\left[  -1,1\right]  .
\]
A short moment's thought reveals that $(x_{n})$ is a Cauchy sequence in
$E^{+}$, but it does not converge in $E^{+}$.
\end{example}

We now require the following simple lemma to achieve our goal.

\begin{lemma}
\label{pos}Let $E$ be vector lattice equipped with an inner product. If
$E^{+}$ is a Chebyshev set in $E$ and $P_{E^{+}}\left(  x\right)  =x^{+}$ for
all $x\in E$ then%
\[
\left\langle x,y\right\rangle \in\left[  0,\infty\right)  \text{ for all
}x,y\in E^{+}.
\]

\end{lemma}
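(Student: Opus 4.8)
The plan is to exploit the single most informative instance of the hypothesis $P_{E^{+}}\left(  x\right)  =x^{+}$: the one obtained by feeding it a negative vector. Fix $x,y\in E^{+}$; since the inner product is real-valued, it suffices to prove $\left\langle x,y\right\rangle \geq0$. The key observation I would make is that $-x\leq0$, so $\left(  -x\right)  ^{+}=\left(  -x\right)  \vee0=0$, and therefore the assumption collapses to
\[
P_{E^{+}}\left(  -x\right)  =\left(  -x\right)  ^{+}=0.
\]
In words, the best approximation to $-x$ from the cone $E^{+}$ is the origin.

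By the definition of best approximation, this means $\left\Vert -x-0\right\Vert \leq\left\Vert -x-c\right\Vert $ for every $c\in E^{+}$; equivalently, writing $\left\Vert -x-c\right\Vert =\left\Vert x+c\right\Vert $, we have $\left\Vert x\right\Vert \leq\left\Vert x+c\right\Vert $ for all $c\in E^{+}$. Squaring and expanding with the inner product then yields
\[
\left\Vert x\right\Vert ^{2}\leq\left\Vert x+c\right\Vert ^{2}=\left\Vert x\right\Vert ^{2}+2\left\langle x,c\right\rangle +\left\Vert c\right\Vert ^{2},\qquad c\in E^{+},
\]
and hence $0\leq2\left\langle x,c\right\rangle +\left\Vert c\right\Vert ^{2}$ for every $c\in E^{+}$.

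Finally I would exploit that $E^{+}$ is a cone: for $t>0$ the vector $ty$ again lies in $E^{+}$, so substituting $c=ty$ gives $0\leq2t\left\langle x,y\right\rangle +t^{2}\left\Vert y\right\Vert ^{2}$. Dividing by $t$ and letting $t\rightarrow0^{+}$ discards the quadratic term and leaves $0\leq2\left\langle x,y\right\rangle $, that is $\left\langle x,y\right\rangle \geq0$, as required.

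As for where the difficulty lies, there is essentially no technical obstacle here, and in particular no appeal to completeness is needed---everything rests only on the defining minimality inequality of the best approximation, which is available as soon as $E^{+}$ is Chebyshev. The one genuine idea is the choice of test point: applying the hypothesis to $-x$ rather than to $x$ forces the projection down to $0$ and converts the abstract Chebyshev statement into the concrete scalar inequality $\left\Vert x\right\Vert \leq\left\Vert x+c\right\Vert $. The homogeneity trick (scaling $y$ by $t\rightarrow0^{+}$) is then the standard device for killing the second-order term, exactly as in the derivation of the variational inequality characterizing projections onto convex cones.
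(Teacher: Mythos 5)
Your proof is correct and takes essentially the same route as the paper: both arguments hinge on feeding $-x$ into the hypothesis to get $P_{E^{+}}\left( -x\right) =\left( -x\right) ^{+}=0$ and then invoking the variational property of the metric projection onto a convex set. The only difference is that the paper cites this property directly (Theorem 4.1 in Deutsch's book, which gives $\left\langle z-P_{K}\left( z\right) ,k-P_{K}\left( z\right) \right\rangle \leq 0$ for all $k\in K$), whereas you re-derive the needed instance of it from the bare definition of best approximation via the cone-scaling trick $c=ty$, $t\rightarrow 0^{+}$, which makes your version self-contained.
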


\begin{proof}
Assume that $E^{+}$ is a Chebyshev set in $E$ and that $P_{E^{+}}\left(
x\right)  =x^{+}$ for all $x\in E$. Pick $x,y\in E^{+}$ and observe that%
\[
P_{E^{+}}\left(  -x\right)  =x^{-}=0.
\]
So, by Theorem 4.1 in \cite{D01}, we obtain%
\[
\left\langle x,y\right\rangle =-\left\langle -x-P_{E^{+}}\left(  -x\right)
,y-P_{E^{+}}\left(  -x\right)  \right\rangle \geq0
\]
and the proof is complete.
\end{proof}

The proof of the central result of this section follows next.

\begin{theorem}
\label{M1}Let $E$ be a directed ordered vector space endowed with an inner
product. Then the following assertions are equivalent.

\begin{enumerate}
\item[\emph{(i)}] $E$ is an inner product lattice.

\item[\emph{(ii)}] $E^{+}$ is a Chebyshev set in $E$ and $P_{E^{+}}$ is
isotone and subadditive.
\end{enumerate}

\noindent In this case, $P_{E^{+}}\left(  x\right)  =x^{+}$ for all $x\in E$.
\end{theorem}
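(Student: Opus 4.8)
The plan is to prove the two implications separately, showing along the way that the unique best approximation is always the positive part.

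For (i) $\Rightarrow$ (ii) I would first record two consequences of the lattice norm. If $a,b\in E^{+}$ then $0\le\left\vert a-b\right\vert \le a+b=\left\vert a+b\right\vert$, so $\left\Vert a-b\right\Vert \le\left\Vert a+b\right\Vert$; squaring and expanding via the inner product yields $\left\langle a,b\right\rangle \ge0$, i.e. the inner product is nonnegative on $E^{+}$. Similarly, if $u,v\ge0$ are disjoint then $\left\vert u-v\right\vert =\left\vert u+v\right\vert$, whence $\left\Vert u-v\right\Vert =\left\Vert u+v\right\Vert$ and therefore $\left\langle u,v\right\rangle =0$; in particular $\left\langle x^{+},x^{-}\right\rangle =0$ for every $x$. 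With these in hand, for arbitrary $x\in E$ and $a\in E^{+}$ I would expand $\left\Vert x-a\right\Vert ^{2}=\left\Vert -x^{-}+(x^{+}-a)\right\Vert ^{2}$ and use $\left\langle x^{-},x^{+}\right\rangle =0$ together with $\left\langle x^{-},a\right\rangle \ge0$ to get $\left\Vert x-a\right\Vert ^{2}\ge\left\Vert x^{-}\right\Vert ^{2}+\left\Vert x^{+}-a\right\Vert ^{2}\ge\left\Vert x-x^{+}\right\Vert ^{2}$, with equality iff $a=x^{+}$. This shows at once that $E^{+}$ is Chebyshev and that $P_{E^{+}}(x)=x^{+}$. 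Isotonicity and subadditivity then follow from the standard lattice inequalities $x\le y\Rightarrow x^{+}\le y^{+}$ and $(x+y)^{+}\le x^{+}+y^{+}$.

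For (ii) $\Rightarrow$ (i) the goal is to show that $p:=P_{E^{+}}(x)$ equals the supremum $x\vee0$; once $x\vee0$ exists for every $x$, a directed ordered vector space is automatically a vector lattice with $x^{+}=P_{E^{+}}(x)$. Now $p\ge0$, and minimality is easy: if $u\ge x$ and $u\ge0$ then $u\in E^{+}$, so $P_{E^{+}}(u)=u$, and isotonicity gives $p\le P_{E^{+}}(u)=u$. The decisive missing inequality is $x\le p$. Here I would exploit directedness together with subadditivity: write $x=a-b$ with $a,b\in E^{+}$, so that $x+b=a\in E^{+}$ and hence $P_{E^{+}}(x+b)=a=x+b$, while subadditivity gives $P_{E^{+}}(x+b)\le P_{E^{+}}(x)+P_{E^{+}}(b)=p+b$. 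Comparing yields $x+b\le p+b$, i.e. $x\le p$. Thus $p=x\vee0$, $E$ is a vector lattice, and $P_{E^{+}}(x)=x^{+}$.

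It then remains to verify that the norm is a lattice norm, and here I would invoke the preliminary results. Having established $P_{E^{+}}(x)=x^{+}$, Lemma \ref{pos} supplies $\left\langle a,b\right\rangle \ge0$ for all $a,b\in E^{+}$, so that for $0\le x\le y$ one has $\left\Vert y\right\Vert ^{2}-\left\Vert x\right\Vert ^{2}=\left\langle y-x,y+x\right\rangle \ge0$, giving monotonicity of the norm on $E^{+}$. For $\left\Vert x\right\Vert =\left\Vert \left\vert x\right\vert \right\Vert$, the variational characterization of the best approximation (Theorem 4.1 in \cite{D01}), tested at $a=0$ and $a=2x^{+}$, forces $\left\langle x-x^{+},x^{+}\right\rangle =0$, i.e. $\left\langle x^{+},x^{-}\right\rangle =0$; expanding $\left\Vert x\right\Vert ^{2}=\left\Vert x^{+}-x^{-}\right\Vert ^{2}$ and $\left\Vert \left\vert x\right\vert \right\Vert ^{2}=\left\Vert x^{+}+x^{-}\right\Vert ^{2}$ and cancelling the vanishing cross term gives the equality. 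These two facts establish the lattice norm.

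I expect the main obstacle to be precisely the inequality $x\le P_{E^{+}}(x)$ in the converse direction. In the complete (Hilbert) setting one would read it off from Moreau's decomposition theorem, which is not available here; the essence of the argument above is that \emph{directedness together with subadditivity} substitutes for completeness and delivers this inequality by an elementary comparison, which is exactly what allows the non-complete case to go through.
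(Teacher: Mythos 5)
Your proof is correct, and it splits cleanly into one direction that matches the paper and one that does not. The converse direction (ii) $\Rightarrow$ (i) is essentially the paper's own argument: the same use of directedness plus subadditivity to force $x \le P_{E^{+}}(x)$ (writing $x = a-b$ with $a,b \in E^{+}$ and comparing $P_{E^{+}}(x+b) = x+b$ with $P_{E^{+}}(x)+b$), the same use of isotonicity to show $P_{E^{+}}(x)$ is below every positive upper bound of $x$, the same appeal to Deutsch's characterization of the metric projection to get $\langle x^{+},x^{-}\rangle = 0$, Pythagoras for $\Vert x\Vert = \Vert\, |x|\, \Vert$, and Lemma \ref{pos} for monotonicity of the norm on the cone; your single identity $\Vert y\Vert^{2}-\Vert x\Vert^{2} = \langle y-x,\,y+x\rangle \ge 0$ is just a tidier packaging of the paper's ``use Lemma \ref{pos} twice.'' Where you genuinely diverge is (i) $\Rightarrow$ (ii). The paper argues order-theoretically: for $y \in E^{+}$ one has $x^{-} = (-x)^{+} \le |x-y|$, hence $\Vert x - x^{+}\Vert = \Vert x^{-}\Vert \le \Vert\, |x-y|\,\Vert = \Vert x - y\Vert$ by the lattice norm --- a one-line proof that $x^{+}$ is \emph{a} best approximation, with uniqueness (needed for the Chebyshev claim) left implicit, resting on strict convexity of an inner-product norm. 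You instead first extract from the lattice norm the quantitative facts $\langle a,b\rangle \ge 0$ for $a,b \in E^{+}$ and $\langle u,v\rangle = 0$ for disjoint positive elements, and then expand $\Vert x-a\Vert^{2} = \Vert x^{-}\Vert^{2} + 2\langle x^{-},a\rangle + \Vert x^{+}-a\Vert^{2}$. This costs a little more computation but buys the equality case for free: the minimum over $a \in E^{+}$ is attained exactly at $a = x^{+}$, so Chebyshevness and $P_{E^{+}}(x)=x^{+}$ fall out of a single inequality with no separate appeal to strict convexity. Both routes are sound; yours is the more self-contained on the uniqueness point, the paper's the shorter on existence.
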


\begin{proof}
$\mathrm{(i)\Rightarrow(ii)}$ If $x\in E$ and $y\in E^{+}$ then from%
\[
-x\leq-x+y\leq\left\vert x-y\right\vert
\]
it follows that%
\[
x^{-}=\left(  -x\right)  ^{+}\leq\left\vert x-y\right\vert .
\]
But then%
\[
\left\Vert x-x^{+}\right\Vert =\left\Vert x^{-}\right\Vert \leq\left\Vert
\left\vert x-y\right\vert \right\Vert =\left\Vert x-y\right\Vert .
\]
It follows straightforwardly that $P_{E^{+}}$ exists and $P_{E^{+}}\left(
x\right)  =x^{+}$. Accordingly, if $x\leq y$ in $E$ then%
\[
P_{E^{+}}\left(  x\right)  =x^{+}\leq y^{+}=P_{E^{+}}\left(  y\right)  ,
\]
meaning that $P_{E^{+}}$ is isotone. Furthermore, if $x,y\in E$ then%
\[
P_{E^{+}}\left(  x+y\right)  =\left(  x+y\right)  ^{+}\leq x^{+}%
+y^{+}=P_{E^{+}}\left(  x\right)  +P_{E^{+}}\left(  y\right)
\]
and thus $P_{E^{+}}$ is subadditive.

$\mathrm{(ii)\Rightarrow(i)}$ First, choose $x\in E$. Since the ordered vector
space $E$ is directed (and so positively generated), the equality $x=y-z$
holds for some $x,y\in E^{+}$. Using the subadditivity of $P_{E^{+}}$, we get%
\begin{align*}
x  &  =y-z\\
&  =P_{E^{+}}\left(  y\right)  -P_{E^{+}}\left(  z\right) \\
&  =P_{E^{+}}\left(  x+z\right)  -P_{E^{+}}\left(  z\right) \\
&  \leq P_{E^{+}}\left(  x\right)  +P_{E^{+}}\left(  z\right)  -P_{E^{+}%
}\left(  z\right)  =P_{E^{+}}\left(  x\right)  .
\end{align*}
Now, let $y\in E^{+}$ such that $x\leq y$. This yields that $y-x\in E^{+}$ and
as $P_{E^{+}}$ is isotone, we derive that $P_{E^{+}}\left(  y\right)
-P_{E^{+}}\left(  x\right)  \in E^{+}$. It follows that%
\[
P_{E^{+}}\left(  x\right)  \leq P_{E^{+}}\left(  y\right)  =y.
\]
In summary, $P_{E^{+}}\left(  x\right)  $ is the supremum of $\left\{
0,x\right\}  $, meaning that $E$ is a vector lattice and $x^{+}=P_{E^{+}%
}\left(  x\right)  $. It remains to show that the norm induced by the inner
product is a lattice norm. To do this, observe that%
\[
\left\langle x^{+},x^{-}\right\rangle =0\text{ for all }x\in E.
\]
Indeed, if $x\in E$ then $4.7$ in \cite{D01} yields that%
\begin{align*}
\left\langle x^{+},x^{-}\right\rangle  &  =-\left\langle -x^{-},x^{+}%
\right\rangle =-\left\langle x-x^{+},x^{+}\right\rangle \\
&  =-\left\langle x-P_{E^{+}}\left(  x\right)  ,P_{E^{+}}\left(  x\right)
\right\rangle =0.
\end{align*}
It follows in particular, by Pythagore's Theorem, that%
\[
\left\Vert \left\vert x\right\vert \right\Vert ^{2}=\left\Vert x^{+}%
+x^{-}\right\Vert ^{2}=\left\Vert x^{+}-x^{-}\right\Vert ^{2}=\left\Vert
x\right\Vert ^{2}.
\]
Finally, if $0\leq x\leq y$ in $E$ then $0\leq y-x$ and so, using Lemma
\ref{pos} twice,%
\[
0\leq\left\langle y-x,x\right\rangle =\left\langle y,x\right\rangle
-\left\Vert x\right\Vert ^{2}\text{\quad and\quad}0\leq\left\langle
y-x,y\right\rangle =\left\Vert y\right\Vert ^{2}-\left\langle y,x\right\rangle
.
\]
This yields straightforwardly that $\left\Vert x\right\Vert \leq\left\Vert
y\right\Vert $, which completes the proof of the theorem.
\end{proof}

We conclude this paper with a concrete example that satisfies the conditions
of Theorem \ref{M1} yet is not covered by Isaac-N\'{e}meth and N\'{e}meth
theorems discussed above.

\begin{example}
The vector lattice of all real-valued continuous functions on the real
interval $\left[  0,1\right]  $ is denoted by $C\left(  \left[  0,1\right]
\right)  $, as sual. Let $E$ denote the vector sublattice of $C\left(  \left[
0,1\right]  \right)  $ of all piecewise polynomial functions. From the density
of the set $\{\cos n:n\in\left\{  0,1,2,....\right\}  \}$ in $[0,1]$ it
follows quickly that the formula
\[
\left\langle x,y\right\rangle =\sum_{n=0}^{+\infty}\frac{x(\cos n)y(\cos
n)}{2^{n}}%
\]
turns $E$ into an inner product lattice. Hence, by \emph{Theorem \ref{M1}},
the positive cone $E^{+}$ is a Chebyshev set and%
\[
P_{E^{+}}(x)=x^{+}\text{ for all }x\in E.
\]
Notice here that $E$ is far from being norm-complete.
\end{example}

\end{document}